\providecommand{\U}[1]{\protect\rule{.1in}{.1in}}
\newtheorem{theorem}{Theorem}
\newtheorem{algorithm}[theorem]{Algorithm}
\newtheorem{condition}[theorem]{Condition}
\newtheorem{definition}[theorem]{Definition}
\newtheorem{lemma}[theorem]{Lemma}
\newenvironment{proof}[1][Proof]{\noindent\textbf{#1.} }{\ \rule{0.5em}{0.5em}}
\begin{document}

\title{\textbf{String-Averaging Projected Subgradient Methods for Constrained
Minimization}}
\author{Yair Censor$^{1}$ and Alexander J. Zaslavski$^{2}\bigskip$\\$^{1}$Department of Mathematics, University of Haifa\\Mt.\ Carmel, Haifa 31905, Israel\\(yair@math.haifa.ac.il) \bigskip\\$^{2}$Department of Mathematics\\The Technion -- Israel Institute of Technology\\Technion City, Haifa 32000, Israel\\(ajzasl@techunix.technion.ac.il)}
\date{February 17, 2013. \\
Revised: June 28, 2013, and August 26, 2013.}
\maketitle

\begin{abstract}
We consider constrained minimization problems and propose to replace the
projection onto the entire feasible region, required in the Projected
Subgradient Method (PSM), by projections onto the individual sets whose
intersection forms the entire feasible region. Specifically, we propose to
perform such projections onto the individual sets in an algorithmic regime of
a feasibility-seeking iterative projection method. For this purpose we use the
recently developed family of Dynamic String-Averaging Projection (DSAP)
methods wherein iteration-index-dependent variable strings and variable
weights are permitted. This gives rise to an algorithmic scheme that
generalizes, from the algorithmic structural point of view, earlier work of
Helou Neto and De Pierro, of Nedi\'{c}, of Nurminski, and of Ram et al.

\end{abstract}

\textbf{Keywords and phrases}: Fixed point, Hilbert space, metric projection,
nonexpansive operator, perturbation resilience, projected subgradient
minimization, string-averaging projection methods, superiorization method,
variable strings, variable weights.\bigskip

\textbf{2010 Mathematics Subject Classification (MSC)}: 65K10, 90C25

\section{Introduction\label{sect:intro}}

\textbf{The problem}: We consider constrained minimization problems of the
form%
\begin{equation}
\operatorname*{minimize}\{\phi(x)\mid x\in C\},\label{prob:cons-min}%
\end{equation}
where $\phi$ is a convex objective function mapping from the $J$-dimensional
Euclidean space $R^{J}$ into the reals and $C\subseteq R^{J}$ is a given
closed convex constraints set. Such constrained optimization problems lie at
the heart of optimization theory and practice and constitute mathematical
models for many scientific and real-world applications. The Projected
Subgradient Method (PSM) for constrained minimization interlaces subgradient
steps for objective function descent with projections onto the feasible region
$C$ to regain feasibility after every subgradient step.

PSM generates a sequence of iterates $\{x^{k}\}_{k=0}^{\infty}$ according to
the recursion formula%
\begin{equation}
x^{k+1}=P_{C}\left(  x^{k}-t_{k}\phi^{\prime}(x^{k})\right)  , \label{eq:sgp}%
\end{equation}
where $t_{k}>0$ is a step-size, $\phi^{\prime}(x^{k})\in\partial\phi(x^{k})$
is a subgradient of $\phi$ at $x^{k},$ and $P_{C}$ stands for the orthogonal
(least Euclidean norm) projection onto the set $C.$ The underlying philosophy
is to perform unconstrained objective function descent steps by moving from
$x^{k}$ to $z^{k}:=x^{k}-t_{k}\phi^{\prime}(x^{k})$ and then regain
feasibility with respect to $C$ by projecting $z^{k}$ onto $C.$

\textbf{Motivation}: This projection onto $C$ is a computational bottleneck in
applying the method due to the required inner loop of (quadratic) minimization
of the distance to the set $C.$ Therefore the projected subgradient method is
mostly useful only when the feasible region is \textquotedblleft simple to
project onto\textquotedblright. To alleviate this difficulty we consider the
situation where $C_{1},C_{2},\dots,C_{m}$ are nonempty closed convex subsets
of $R^{J},$ where $m$ is a natural number, define%
\begin{equation}
C:=\cap_{i=1}^{m}C_{i},\label{eq:1.1}%
\end{equation}
and propose to replace the projection $P_{C}$ in (\ref{eq:sgp}) by projections
onto the individual sets $C_{i}$ executed in a specific algorithmic regime of
a \textit{feasibility-seeking} iterative projection method. This is an
important development because often the entire feasible region $C$ is not
\textquotedblleft simple to project onto\textquotedblright\ whereas the
individual sets are. The class of \textit{projection methods} is understood
here as the class of methods that have the feature that they can reach an aim
related to the family of sets $\{C_{1},C_{2},\dots,C_{m}\}$ by performing
projections (orthogonal, i.e., least Euclidean distance, or others) onto the
individual sets $C_{i}.$ The advantage of such methods occurs in situations
where projections onto the individual sets are computationally simple to
perform or at least simpler than a projection on the intersection $C$. Such
methods have been in recent decades extensively investigated mathematically
and used experimentally with great success on some huge and sparse real-world
applications, consult, e.g., \cite{bb96,cccdh10}, the books
\cite{BC11,byrnebook,CEG12,CZ97,chinneck-book,ER11,galantai,GTH}, and the
recent paper \cite{bk13}.

\textbf{Contribution}: We employ here for the feasibility-seeking method the
\textit{String-Averaging Projection} (SAP) scheme. This class was first
introduced in \cite{ceh01} (in a formulation that is not restricted to
feasibility-seeking) and was subsequently studied further in
\cite{bmr04,bdhk07,CS08,CS09,ct03,crombez}, see also \cite[Example 5.20]%
{BC11}. SAP methods were also employed in applications \cite{pen09,rhee03}.
The Component-Averaged Row Projections (CARP) method of \cite{gordon} also
belongs to the class of SAP methods. Within the class of projection methods,
SAP methods do not constitute a single algorithm but rather an
\textit{algorithmic scheme, }which means that by making a specific choice of
strings and weights in SAP, along with choices of other parameters in the
scheme, a deterministic algorithm for the problem at hand can be obtained.

Here we use the recently developed family of \textit{Dynamic String-Averaging
Projection} (DSAP) methods of \cite{cz12} wherein iteration-index-dependent
variable strings and variable weights are permitted (in all works prior to
\cite{cz12}, SAP methods were formulated with a single predetermined set of
strings and a single predetermined set of weights.)

\textbf{Relation with previous works: }The literature on subgradient
minimization is vast and we mention \cite{kiwiel, polyak} as reperesentatives.
Specifically to the research presented here, the introduction of DSAP to
replace the projection $P_{C}$ in (\ref{eq:sgp}) gives rise to an algorithmic
scheme that includes and generalizes, from the algorithmic structural point of
view, earlier recent work of Helou Neto and De Pierro
\cite{hdp-siam09,hdp11-1}, of Nedi\'{c} \cite{nedic-mp11}, of Nurminski
\cite{nur08b,nur08a,nur10,nur11}, and of Ram et al. \cite{nedic-et-al}. 

\textbf{Nurminski:} The algorithms of Nurminski use Fej\'{e}r operators, that
can be used in feasibility-seeking, and introduces into them disturbances with
diminishing step-sizes $\lambda_{k}\rightarrow0$ as $k\rightarrow\infty$,
where the rate of this tendency is such that $\sum_{k=0}^{\infty}\lambda
_{k}=+\infty.$ Under these conditions and a variety of additional assumptions,
Nurminski showed asymptotic convergence of the iterates generated by his
algorithms to a minimum point of the constrained minimization problem.

\textbf{Helou Neto and De Pierro:} The framework proposed by Helou Neto and De
Pierro uses interlacing of {}\textquotedblleft feasibility
operators\textquotedblright{}\ with {}\textquotedblleft optimality
operators\textquotedblright{}\ with the aim of creating exact constrained
minimization algorithms. Similarly to Nurminski, they employ diminishing
step-sizes$.$ Under these conditions and a variety of additional assumptions,
different than those of Nurminski, they show asymptotic convergence of the
iterates generated by their algorithmic framework to a minimum point of the
constrained minimization problem.

However, when it comes to derivation of specific algorithms from the general
framework of \cite[Equation (3)]{hdp11-1}, their feasibility operator $F$
invariably takes the form
\begin{equation}
\mathcal{F}_{F}(x)=x-\mu(x)\nabla F(x),\label{eq:hdp-feas-op}%
\end{equation}
where the function $F(x),$ whose gradient is calculated, is \textquotedblleft
a convex function such that the set of minima of this function coincides with
the set $X$ [in \cite[Equation (3)]{hdp11-1} $X$ is the feasible set of the
minimization problem and should be identified with $C=\cap_{i=1}^{m}C_{i}$ in
our notation] when it is not empty and defines a solution in an appropriate
way (least squares for example) otherwise\textquotedblright{}\ and $\mu(x)$
are some parameters that are restricted in a particular manner as in
\cite[Lemma 7 or Corollary 8]{hdp11-1}. Our feasibility-seeking DSAP
algorithms are not limited to the form of (\ref{eq:hdp-feas-op}).

\textbf{Nedi\'{c} and Ram et al.:} The overall approach here is to apply
gradient and subgradient iterative methods for the objective function
minimization and interlace into them random feasibility updates. The resulting
\textquotedblleft random projection method\textquotedblright\ \cite[Equation
(4)]{nedic-mp11} bears structural similarity to our approach since it replaces
the projection $P_{C}$ in (\ref{eq:sgp}) by projections onto individual sets
$C_{i}$ but not in a DSAP regime which is more general. The randomness refers
to the way the constraints are picked up for the feasibility updates.

There are various differences among all the above works and between them and
our work, differences in overall setup of the problems, differences in the
assumptions used for the various convergence results, etc. This is not the
place for a full review of all these differences but the main contribution of
our work presented here lies in the many more algorithmic possibilities
revealed by the use of DSAP general scheme, while retaining overall
convergence to an (exact) solution of the problem (\ref{prob:cons-min}).

\textbf{Paper layout}: The DSAP algorithmic scheme is presented in Section
\ref{sect:SAPv} and our \textit{String-Averaging Projected Subgradient Method
}(SA-PSM) is presented in Section \ref{sec:sa-psm} and its convergence
analysis is done in Section \ref{sec:convergence}.

\section{The dynamic string-averaging projection \newline method with variable
strings and weights\label{sect:SAPv}}

Our main result in Theorem \ref{thm:7.1} below is obtained in the
finite-dimensional Euclidean space but the algorithms described in this
section can be used in the more general setting of an infinite-dimensional
Hilbert space. We first present, for the reader's convenience and for the sake
of completeness, the string-averaging algorithmic scheme of \cite{ceh01}. Let
$H$ be a real Hilbert space with inner product $\left\langle \cdot
,\cdot\right\rangle $ and induced norm $||\cdot||$. For $t=1,2,\dots,M,$ let
the \textit{string} $I_{t}$ be an ordered subset of $\{1,2,\dots,m\}$ of the
form%
\begin{equation}
I_{t}=(i_{1}^{t},i_{2}^{t},\dots,i_{m(t)}^{t}),\label{block}%
\end{equation}
with $m(t)$ the number of elements in $I_{t}.$ Suppose that there is a set
$S\subseteq H$ such that there are operators $O_{1},O_{2},\dots,O_{m}$ mapping
$S$ into $S$ and an additional operator $O$ which maps $S^{M}$ into $S$.

\begin{algorithm}
\label{alg:sa}\textbf{The string-averaging algorithmic scheme}

\textbf{Initialization}: $x^{0}\in S$ is arbitrary.

\textbf{Iterative Step}: given the current iterate $x^{k}$,

(i) calculate, for all $t=1,2,\dots,M,$%
\begin{equation}
T_{t}(x^{k})=O_{i_{m(t)}^{t}}\cdots O_{i_{2}^{t}}O_{i_{1}^{t}}(x^{k}),
\label{as1}%
\end{equation}
(ii) and then calculate%
\begin{equation}
x^{k+1}=O(T_{1}(x^{k}),T_{2}(x^{k}),\ldots,T_{M}(x^{k})). \label{as2}%
\end{equation}

\end{algorithm}

For every $t=1,2,\dots,M,$ this algorithmic scheme applies to $x^{k}$
successively the operators whose indices belong to the $t$th string. This can
be done in parallel for all strings and then the operator $O$ maps all
end-points $T_{t}(x^{k})$ onto the next iterate $x^{k+1}$. This is indeed an
algorithm provided that the operators $\{O_{i}\}_{i=1}^{m}$ and $O$ all have
algorithmic implementations. In this framework we get a (fully)
\textit{sequential algorithm} by the choice $M=1$ and $I_{1}=(1,2,\dots,m)$ or
a (fully) \textit{simultaneous algorithm} by the choice $M=m$ and
$I_{t}=(t),\ t=1,2,\dots,M.$ Originally \cite{ceh01}, Algorithm \ref{alg:sa}
can employ operators other than projections and convex combinations, therefore
it is more general than SAP. On the other hand, Algorithm \ref{alg:sa} is
formulated for fixed strings and weights and this is generalized in the
formulation of DSAP that follows. 

Next we describe the \textit{dynamic string-averaging projection} (DSAP)
method with variable strings and weights. For $i=1,2,\dots,m,$ we denote the
projection onto the set $C_{i}$ by $P_{i}=P_{C_{i}}.$ An \textit{index vector}
is a vector $t=(t_{1},t_{2},\dots,t_{q})$ such that $t_{i}\in\{1,2,\dots,m\}$
for all $i=1,2,\dots,q$, its \textit{length} is denoted by $\ell(t)=q,$ and we
define the operator $P[t]$ as the product of the individual projections onto
the sets whose indices appear in the index vector $t$, namely,%
\begin{equation}
\;P[t]:=P_{t_{q}}P_{t_{q-1}}\cdots P_{t_{1}},
\end{equation}
and call it a \textit{string operator}.

\begin{definition}
\label{def:fit}A finite set $\Omega$ of index vectors is called \texttt{fit
}if for each $i\in\{1,2,\dots,m\}$, there exists a vector $t=(t_{1}%
,t_{2},\dots,t_{q})\in\Omega$ where $q$ is a natural number such that
$t_{s}=i$ for some $s\in\{1,2,\dots,q\}$.
\end{definition}

Note that in the above definition $q$ can vary. For each index vector $t$ the
string operator is nonexpansive, since the individual projections are, i.e.,%
\begin{equation}
\left\Vert P[t](x)-P[t](y)\right\Vert \leq||x-y||\text{ for all }x,y\in
H,\label{eq:1.8}%
\end{equation}
and also%
\begin{equation}
P[t](x)=x\text{ for all }x\in C.\label{eq:1.9}%
\end{equation}

Denote by $\mathcal{M}$ the collection of all pairs $(\Omega,w)$, where
$\Omega$ is a fit finite set of index vectors and%
\begin{equation}
w:\Omega\rightarrow(0,\infty)\text{ is such that }\sum_{t\in\Omega}w(t)=1.
\label{eq:1.6}%
\end{equation}

A pair $(\Omega,w)\in\mathcal{M}$ and the function $w$ were called in
\cite{bdhk07} an \textit{amalgamator} and a \textit{fit weight function},
respectively. For any $(\Omega,w)\in\mathcal{M}$ define the convex combination
of the end-points of all strings defined by members of $\Omega$ by%
\begin{equation}
P_{\Omega,w}(x):=\sum_{t\in\Omega}w(t)P[t](x),\;x\in H. \label{eq:1.7}%
\end{equation}

Fix a number $\Delta\in(0,1/m)$ and an integer $\bar{q}\geq m$ and denote by
$\mathcal{M}_{\ast}\equiv\mathcal{M}_{\ast}(\Delta,\bar{q})$ the set of all
$(\Omega,w)\in\mathcal{M}$ such that the lengths of the strings are bounded
and the weights are bounded away from zero, namely,%
\begin{equation}
\mathcal{M}_{\ast}:=\{(\Omega,w)\in\mathcal{M\mid}\text{ }\ell(t)\leq\bar
{q}\text{ and }\Delta\leq w(t),\text{ for all }t\in\Omega\}.\label{eq:11516}%
\end{equation}
We make the assumption that $\bar{q}\geq m$ because only in this case there
exists $(\Omega,w)\in M_{\ast}$ such that $\Omega$ is a singleton containing
the index vector $(1,2,\dots,m)$ so that our class of algorithms includes also
the classical cyclic projection algorithm.

The dynamic string-averaging projection (DSAP) method with variable strings
and variable weights can now be described by the following algorithm.

\begin{algorithm}
\label{alg:sap-v}$\left.  {}\right.  $\textbf{The DSAP method with variable
strings and variable weights}

\textbf{Initialization}: select an arbitrary $x^{0}\in H$,

\textbf{Iterative step}: given a current iteration vector $x^{k}$ pick a pair
$(\Omega_{k},w_{k})\in\mathcal{M}_{\ast}$ and calculate the next iteration
vector $x^{k+1}$ by%
\begin{equation}
x^{k+1}=P_{\Omega_{k},w_{k}}(x^{k})\text{.\label{eq:algv}}%
\end{equation}

\end{algorithm}

The convergence properties and the, so called, perturbation resilience of this
DSAP method were analyzed in \cite{cz12}.

\section{The string-averaging projected subgradient method\label{sec:sa-psm}}

Our proposed string-averaging projected subgradient method (SA-PSM) for the
solution of (\ref{prob:cons-min}) performs string-averaging steps with respect
to the individual constraints of (\ref{eq:1.1}) instead of the single
projection onto the entire feasible set $C$ of (\ref{prob:cons-min}) dictated
by the PSM of (\ref{eq:sgp}).

\begin{algorithm}
\label{alg:sa-psm}$\left.  {}\right.  $\textbf{The string-averaging projected
subgradient method (SA-PSM)}

\textbf{(0) Initialization}: Let $\{\alpha_{k}\}_{k=0}^{\infty}\subset(0,1]$
be a scalar sequence and select arbitrary vectors $x^{0},$ $s^{0}\in H$,

\textbf{(1)} \textbf{Iterative step}: given a current iteration vector $x^{k}$
and a current vector $s^{k},$ pick a pair $(\Omega_{k},w_{k})\in
\mathcal{M}_{\ast}$ and calculate the next vectors as follows:

\textbf{(1.1) }if $0\in\partial\phi(x^{k})$ then set $s^{k}=0$ and calculate%
\begin{equation}
x^{k+1}=P_{\Omega_{k},w_{k}}(x^{k})\text{.\label{eq:alg-sa-psm-1}}%
\end{equation}

\textbf{(1.2)} if $0\notin\partial\phi(x^{k})$ then choose and set $s^{k}%
\in\partial\phi(x^{k})$ and calculate%
\begin{equation}
x^{k+1}=P_{\Omega_{k},w_{k}}\left(  x^{k}-\alpha_{k}\frac{s^{k}}{\parallel
s^{k}\parallel}\right)  \text{.\label{eq:alg-sa-psm-2}}%
\end{equation}

\end{algorithm}

For each $x\in H$ and nonempty set $E\subseteq H$ define the distance%
\begin{equation}
d(x,E)=\inf\{||x-y||\mid\;y\in E\}.
\end{equation}
Denoting the solution set of (\ref{prob:cons-min}) by%
\begin{equation}
SOL(\phi,C):=\{x\in C\mid\;\phi(x)\leq\phi(y){\text{ for all }}y\in C\},
\end{equation}
we specify, in our convergence result presented in the sequel, conditions
which guarantee that for every $\varepsilon\in(0,1),$ and any sequence
$\{x^{k}\}_{k=0}^{\infty}$, generated by Algorithm \ref{alg:sa-psm}, there
exists an integer $K$ so that, for all $k\geq K,$ the following inequalities
hold:%
\begin{gather}
d(x^{k},SOL(\phi,C))\leq\varepsilon{\text{ }}\\
{\text{and }}\nonumber\\
\phi(x^{k})\leq\operatorname{inf}\{\phi(z)\mid z\in SOL(\phi,C)\}+\varepsilon
\bar{L},
\end{gather}
where $\bar{L}$ is some well-defined constant. For each $x\in H$ and $r>0$
define the closed ball%
\begin{equation}
B(x,r)=\{y\in H\mid\;||x-y||\leq r\}.
\end{equation}

\begin{definition}
\label{def:M-fit}Assume that $C_{1},C_{2},\dots,C_{m}$ are nonempty closed
convex subsets of $R^{J},$ $C=\cap_{i=1}^{m}C_{i},$ and that $C$ is nonempty.
We call a finite set $\Omega$ of index vectors $M$\texttt{-fit with respect to
the family }$\{C_{1},C_{2},\dots,C_{m}\}$ ($M$\texttt{-fit},\texttt{ }for
short)\texttt{ }if $\Omega$ is fit (see Definition \ref{def:fit}) and there
exists an $M>0$ such that for each $t=(t_{1},t_{2},\dots,t_{\ell(t)})\in
\Omega$ there is a $u\in\{1,2,\dots,\ell(t)\}$ such that%
\begin{equation}
C_{t_{u}}\subset B(0,M).
\end{equation}
Denote the set $\mathcal{M}_{\ast\ast}:=\{(\Omega,w)\in\mathcal{M}_{\ast}%
\mid\Omega$ is $M$-fit$\}$.
\end{definition}

In the next theorem we establish the convergence of sequences generated by
Algorithm \ref{alg:sap-v} with computational errors.

\begin{theorem}
\label{thm:thm5.1}Assume that $C_{1},C_{2},\dots,C_{m}$ are nonempty closed
convex subsets of $R^{J},$ $C=\cap_{i=1}^{m}C_{i}$ and $C$ is nonempty. For
some $M>0$ let there exist an index $s\in\{1,2,\dots,m\}$ such that%
\begin{equation}
C_{s}\subset B(0,M). \label{eq:boundedness}%
\end{equation}
Further, let%
\begin{equation}
\{\gamma_{k}\}_{k=1}^{\infty}\subset(0,1]\text{ so that}\;\lim_{k\rightarrow
\infty}\gamma_{k}=0, \label{eq:5.3}%
\end{equation}
and $\varepsilon>0$. Then there exists an integer $K$ such that for any
$\{(\Omega_{k},w_{k})\}_{k=1}^{\infty}\subset\mathcal{M}_{\ast\ast}$ and any
sequence $\{x^{k}\}_{k=0}^{\infty}$, which satisfies for all integer $k\geq1$,%
\begin{equation}
\left\Vert x^{k}-P_{\Omega_{k},w_{k}}(x^{k-1})\right\Vert \leq\gamma_{k},
\end{equation}
the inequality $d(x^{k},C)\leq\varepsilon$ holds for all integers $k\geq K$.
\end{theorem}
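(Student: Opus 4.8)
The heart of the matter is to show that the DSAP iteration operators $P_{\Omega_k,w_k}$ contract distance to $C$ by a uniform amount whenever the iterate is bounded and not yet $\varepsilon$-close to $C$, so that the errors $\gamma_k$, being eventually negligible, cannot prevent the iterates from entering and remaining in the $\varepsilon$-neighborhood of $C$. The plan is therefore to establish three ingredients and combine them.

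First I would record a uniform boundedness fact. The hypothesis \eqref{eq:boundedness} that some $C_s\subset B(0,M)$ forces every exact string operator to land in a fixed ball: since each $\Omega_k$ is fit, every index vector $t\in\Omega_k$ contains the index $s$ somewhere, and applying $P_s$ drives the point into $B(0,M)$; as the subsequent projections are nonexpansive and $0\in C_s\subseteq B(0,M)$ is fixed, one gets $\|P[t](x)\|\le M+\text{(something controlled)}$, and by convexity $\|P_{\Omega_k,w_k}(x)\|$ is likewise bounded. Feeding in the summable-looking error bound $\|x^k-P_{\Omega_k,w_k}(x^{k-1})\|\le\gamma_k\le 1$ and using $\lim\gamma_k=0$, I would show by a short induction that the whole sequence $\{x^k\}$ stays in some fixed ball $B(0,R)$ depending only on $M$ and $\|x^0\|$. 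This confines the analysis to a compact set.

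Second, I would invoke the key quantitative nonexpansivity estimate for the averaged projection operator: on the compact set $B(0,R)$ there is a modulus $\delta(\varepsilon)>0$ such that $d\bigl(P_{\Omega,w}(x),C\bigr)\le d(x,C)-\delta(\varepsilon)$ (or a multiplicative analogue) whenever $d(x,C)\ge\varepsilon$, uniformly over $(\Omega,w)\in\mathcal{M}_\ast$. This is exactly the type of fixed-point/feasibility contraction that the convergence theory of \cite{cz12} supplies for the DSAP scheme, relying on the fact that weights are bounded below by $\Delta$, string lengths are bounded by $\bar q$, and $\Omega$ is fit so that every constraint is hit. I would state this as the engine of the argument, citing the analysis of Algorithm \ref{alg:sap-v}.

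Finally I would assemble the telescoping/contraction argument. Choose $K_0$ so that $\gamma_k<\delta(\varepsilon/2)$ for all $k\ge K_0$; then whenever $d(x^{k-1},C)\ge\varepsilon$, the exact step decreases the distance by at least $\delta$, and the error $\gamma_k$ spoils this by less than $\delta/2$, giving a net decrease of at least $\delta/2$. Since $d(\cdot,C)\ge 0$, the distance cannot keep decreasing by a fixed positive amount forever, so after finitely many steps past $K_0$ one reaches an index with $d(x^k,C)<\varepsilon$; a separate check shows that once inside, a single averaged step plus the small error keeps the iterate within $\varepsilon$, so the property persists for all larger $k$. Taking $K$ to absorb all these thresholds yields the claim. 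The main obstacle I anticipate is making the uniform contraction modulus $\delta(\varepsilon)$ genuinely independent of the choice of $(\Omega_k,w_k)\in\mathcal{M}_{\ast\ast}$ and of $k$; this is where the structural constraints $\Delta\le w(t)$, $\ell(t)\le\bar q$, fitness, and the finite dimensionality of $R^J$ (ensuring compactness, hence a positive infimum of the otherwise merely positive per-point decrease) all must be used together, and it is the step I would spell out most carefully rather than treat as routine.
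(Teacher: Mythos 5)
Your proposal is sound in outline but follows a genuinely different route from the paper's proof. The paper does not construct a one-step contraction modulus at all: it first proves boundedness of the perturbed iterates (Lemma \ref{lem:6.1}), then invokes the trajectory-level result \cite[Theorem 7]{cz12}, which gives a single integer $k_{1}$, uniform over all choices from $\mathcal{M}_{\ast\ast}$ and all starting points in $B(0,3M+1)$, after which every \emph{exact} DSAP trajectory is within $\varepsilon/4$ of its limit in $C$. It then runs a shadow-sequence argument: over a window of length $k_{1}$ it compares the perturbed iterates $y^{k}$ with the exact iterates $z^{k}$ launched from the same point, accumulates at most $k_{1}$ errors of size $(\varepsilon/4)(k_{1}+1)^{-1}$ each via nonexpansivity, and concludes $d(x^{v},C)\le\varepsilon/2$. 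Your approach instead builds a uniform single-step Fej\'{e}r decrease $d(P_{\Omega,w}(x),C)\le d(x,C)-\delta(\varepsilon)$ on a compact set and telescopes. That modulus does exist: the set of fit $\Omega$ with $\ell(t)\le\bar q$ is finite, the admissible weights form a compact set, the pointwise gap $d(x,C)^{2}-\Vert P_{\Omega,w}(x)-P_{C}(x)\Vert^{2}$ is continuous and strictly positive whenever $d(x,C)\ge\varepsilon$ (if all intermediate projection displacements vanished, fitness would force $x\in C$), so compactness in $R^{J}$ gives a positive infimum. Your route is more self-contained and yields an explicit rate, at the price of having to prove the modulus yourself; the paper's route outsources the hard uniformity to \cite{cz12} and only needs nonexpansivity to propagate errors. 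Be aware that \cite[Theorem 7]{cz12}, as used here, is a statement about whole trajectories, not a per-step decrease, so you cannot simply cite it for your engine --- you must carry out the compactness argument you sketched.

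Two local slips to repair. In the boundedness step you argue that fitness forces every index vector to contain the index $s$; it does not --- fitness only guarantees that each index appears in \emph{some} string. What actually saves you is the $M$-fitness of Definition \ref{def:M-fit} built into $\mathcal{M}_{\ast\ast}$: every string contains some $t_{u}$ with $C_{t_{u}}\subset B(0,M)$. Also $0\in C_{s}$ is unwarranted; bound $\Vert P[t](x)\Vert$ by comparing with a fixed $z\in C\subseteq B(0,M)$, as in \eqref{eq:zm}. Finally, in the persistence step, work with the threshold $\varepsilon/2$ inside the target $\varepsilon$ (requiring $\gamma_{k}<\min\{\delta(\varepsilon/2)/2,\varepsilon/2\}$) so that an iterate in the annulus $\varepsilon/2\le d(\cdot,C)<\varepsilon$ still contracts and one just inside $\varepsilon/2$ cannot be pushed past $\varepsilon$ by the error.
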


In order to prove Theorem \ref{thm:thm5.1} we need the following auxiliary lemma.

\begin{lemma}
\label{lem:6.1}{Under the assumptions of Theorem \ref{thm:thm5.1},} if
$\{(\Omega_{k},w_{k})\}_{k=1}^{\infty}\subset\mathcal{M}_{\ast\ast}$ and
$\{x^{k}\}_{k=0}^{\infty}$ satisfies for all integer $k\geq1$,
\begin{equation}
\left\Vert x^{k}-P_{\Omega_{k},w_{k}}(x^{k-1})\right\Vert \leq1,
\label{eq:6.2}%
\end{equation}
then $\left\Vert x^{k}\right\Vert \leq3M+1$ for all $k\geq1$.
\end{lemma}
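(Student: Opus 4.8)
The plan is to derive a \emph{uniform} bound $\|P_{\Omega_{k},w_{k}}(x^{k-1})\|\leq 3M$, valid for every $k$ and independent of the vector fed into the operator, and then to conclude by a single triangle inequality against the hypothesis (\ref{eq:6.2}). The mechanism that makes a uniform bound possible is the $M$-fit property: it forces each string to route its input through a set contained in $B(0,M)$, and once an intermediate point is known to be bounded, nonexpansiveness of the remaining projections keeps everything under control.

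First I would reduce the problem to bounding a single string operator $P[t]$ for an arbitrary index vector $t=(t_{1},\dots,t_{\ell(t)})$ belonging to an $M$-fit set $\Omega$. Since $P_{\Omega,w}(x)=\sum_{t\in\Omega}w(t)P[t](x)$ is a convex combination (the weights are positive and sum to $1$ by (\ref{eq:1.6})), a bound $\|P[t](x)\|\leq 3M$ holding for every $t$ immediately yields $\|P_{\Omega,w}(x)\|\leq 3M$ for all $x$.

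The core step is the bound on $\|P[t](x)\|$. By $M$-fitness there is an index $u\in\{1,\dots,\ell(t)\}$ with $C_{t_{u}}\subset B(0,M)$. I would split the composition as $P[t]=Q\circ R$, where $R:=P_{t_{u}}\cdots P_{t_{1}}$ consists of the first $u$ projections and $Q:=P_{t_{\ell(t)}}\cdots P_{t_{u+1}}$ of the remainder. The last projection applied inside $R$ is $P_{t_{u}}$, so $R(x)\in C_{t_{u}}\subset B(0,M)$ for every $x$, whence $\|R(x)\|\leq M$. Now fix any $c\in C$; this is possible because $C$ is assumed nonempty, and moreover $C=\cap_{i}C_{i}\subseteq C_{s}\subset B(0,M)$ gives $\|c\|\leq M$. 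The tail $Q$ is a composition of projections, hence nonexpansive by (\ref{eq:1.8}), and it fixes $c$ by (\ref{eq:1.9}). Therefore
\begin{equation}
\|P[t](x)-c\|=\|Q(R(x))-Q(c)\|\leq\|R(x)-c\|\leq\|R(x)\|+\|c\|\leq 2M,
\end{equation}
and one more triangle inequality gives $\|P[t](x)\|\leq\|P[t](x)-c\|+\|c\|\leq 3M$.

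Assembling the pieces: from the convex-combination step, $\|P_{\Omega_{k},w_{k}}(x^{k-1})\|\leq 3M$ for every $k$, and then (\ref{eq:6.2}) yields $\|x^{k}\|\leq\|x^{k}-P_{\Omega_{k},w_{k}}(x^{k-1})\|+\|P_{\Omega_{k},w_{k}}(x^{k-1})\|\leq 1+3M$, which is the claim. The only genuinely delicate point, and the step I expect to be the main obstacle to phrase cleanly, is the splitting argument: one must keep straight that the intermediate bound comes from $R$ landing inside the \emph{bounded} constraint set $C_{t_{u}}$, while the uniform control of the tail comes from the \emph{separate} facts that $Q$ is nonexpansive and fixes the (also bounded) feasible set $C$. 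Conflating these two uses of the constant $M$ is the natural mistake to avoid, and the emptiness-of-$Q$ degenerate case $u=\ell(t)$ should be noted to cause no trouble since the identity is nonexpansive and fixes $c$.
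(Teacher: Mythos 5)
Your proposal is correct and follows essentially the same route as the paper's proof: both use the $M$-fit property to force an intermediate point of each string into $B(0,M)$, then exploit nonexpansiveness of the remaining projections together with the fact that they fix a point $z\in C\subset B(0,M)$ to get $\left\Vert P[t](x)-z\right\Vert \leq 2M$, and finish with the convex combination and the triangle inequality against (\ref{eq:6.2}). The only difference is cosmetic bookkeeping in how the final $3M+1$ is assembled.
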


\begin{proof}
Let $z\in C,$ then, by (\ref{eq:boundedness}),
\begin{equation}
\left\Vert z\right\Vert \leq M. \label{eq:zm}%
\end{equation}
For any $k\geq0$ (\ref{eq:1.7}) implies
\begin{equation}
P_{\Omega_{k+1},w_{k+1}}(x^{k})=\sum_{t\in\Omega_{k+1}}w_{k+1}(t)P[t](x^{k}).
\label{eq:6.5}%
\end{equation}
By (\ref{eq:6.2}), (\ref{eq:zm}), (\ref{eq:6.5}) and (\ref{eq:1.6}),%
\begin{align}
\left\Vert x^{k+1}\right\Vert  &  \leq\left\Vert z\right\Vert +\left\Vert
x^{k+1}-z\right\Vert \leq M+\left\Vert x^{k+1}-z\right\Vert \nonumber\\
&  \leq M+\left\Vert z-P_{\Omega_{k+1},w_{k+1}}(x^{k})\right\Vert +\left\Vert
P_{\Omega_{k+1},w_{k+1}}(x^{k})-x^{k+1}\right\Vert \nonumber\\
&  \leq M+1+\parallel z-\sum_{t\in\Omega_{k+1}}w_{k+1}(t)P[t](x^{k}%
)\parallel\nonumber\\
&  \leq M+1+\sum_{t\in\Omega_{k+1}}w_{k+1}(t)\left\Vert z-P[t](x^{k}%
)\right\Vert . \label{eq:6.6}%
\end{align}
Let $t=(t_{1},t_{2},\dots,t_{\ell(t)})\in\Omega_{k+1},$ then%
\begin{equation}
P[t](x^{k})=P_{t_{\ell(t)}}P_{t_{\ell(t)-1}}\cdots P_{t_{1}}(x^{k}),
\label{eq:6.8}%
\end{equation}
and, since $\{(\Omega_{k},w_{k})\}_{k=1}^{\infty}\subset\mathcal{M}_{\ast\ast
},$ there is an index $j\in\{1,2,\dots,\ell(t)\}$ for which%
\begin{equation}
C_{t_{j}}\subset B(0,M). \label{eq:6.9}%
\end{equation}
From (\ref{eq:6.9}),%
\begin{equation}
\left\Vert P_{t_{j}}P_{t_{j-1}}\cdots P_{t_{1}}(x^{k})\right\Vert \leq M.
\label{eq:6.10}%
\end{equation}
Thus, since $z\in C$ and by (\ref{eq:zm}), (\ref{eq:6.8}), (\ref{eq:6.10}) and
properties of the projection operator we have%
\begin{equation}
\left\Vert z-P[t](x^{k})\right\Vert \leq\left\Vert z-P_{t_{j}}P_{t_{j-1}%
}\cdots P_{t_{1}}(x^{k})\right\Vert \leq2M, \label{eq:6.11}%
\end{equation}
yielding, by (\ref{eq:6.6}) and (\ref{eq:6.11}),
\begin{equation}
\left\Vert x^{k+1}\right\Vert \leq3M+1,
\end{equation}
which completes the proof of the lemma.
\end{proof}

Now we are ready to prove {Theorem \ref{thm:thm5.1}.}

\textbf{Proof of Theorem \ref{thm:thm5.1}}{. }We use \cite[Theorem 7]{cz12}.
All its assumptions hold here. Indeed, Assumption (i) of \cite[Theorem
7]{cz12} follows from (\ref{eq:boundedness}), Assumption (ii) holds since our
space is finite-dimensional, and Assumptions (iii) and (iv) obviously hold.
Therefore, there exists an integer $k_{1}$ so that, for each $\{(\Omega
_{k},w_{k})\}_{k=1}^{\infty}\subset\mathcal{M}_{\ast\ast},$ any sequence
$\{x^{k}\}_{k=0}^{\infty}$, generated by Algorithm \ref{alg:sap-v} with
$\left\Vert x^{0}\right\Vert \leq3M+1,$ converges, $\lim_{s\rightarrow\infty
}x^{s}\in C$, and%
\begin{equation}
\Vert x^{k}-\lim_{s\rightarrow\infty}x^{s}\Vert\leq\varepsilon/4{\text{ for
all }}k\geq k_{1}.
\end{equation}
By (\ref{eq:5.3}) there is an integer $k_{2}$ such that%
\begin{equation}
\gamma_{k}\leq(\varepsilon/4)(k_{1}+1)^{-1}{\text{ for all }}k\geq
k_{2}.\label{eq:6.12}%
\end{equation}
Define%
\begin{equation}
K:=k_{1}+k_{2}+2.\label{eq:6.13}%
\end{equation}
We will show that $d(x^{k},C)\leq\varepsilon$ for all $k\geq K.$ To this end
take some $v\geq K$ and let%
\begin{equation}
y^{0}:=x^{v-k_{1}}.\label{eq:6.16}%
\end{equation}
For any $k\geq1$ let%
\begin{equation}
y^{k}:=x^{k+v-k_{1}}\text{ and}\;(\tilde{\Omega}_{k},\tilde{w}_{k}%
):=(\Omega_{k+v-k_{1}},w_{k+v-k_{1}}).\label{eq:6.17}%
\end{equation}
By (\ref{eq:5.3}), and since $\{(\Omega_{k},w_{k})\}_{k=1}^{\infty}%
\subset\mathcal{M}_{\ast\ast}$ and the sequence $\{x^{k}\}_{k=0}^{\infty}$
satisfies%
\begin{equation}
\left\Vert x^{k}-P_{\Omega_{k},w_{k}}(x^{k-1})\right\Vert \leq\gamma
_{k},\text{ for all }k\geq1,
\end{equation}
we can use Lemma \ref{lem:6.1} to obtain%
\begin{equation}
\left\Vert x^{k}\right\Vert \leq3M+1{\text{ for all }}k\geq1.\label{eq:6.18}%
\end{equation}
Therefore,
\begin{equation}
\left\Vert y^{0}\right\Vert =\left\Vert x^{v-k_{1}}\right\Vert \leq
3M+1.\label{eq:6.19}%
\end{equation}
It follows that, for each $k\geq1$,%
\begin{align}
\left\Vert y^{k}-P_{\tilde{\Omega}_{k},\tilde{w}_{k}}(y^{k-1})\right\Vert  &
=\left\Vert x^{k+v-k_{1}}-P_{\Omega_{k+v-k_{1}},w_{k+v-k_{1}}}(x^{k+v-k_{1}%
-1})\right\Vert \nonumber\\
&  \leq\gamma_{k+v-k_{1}}\leq(\varepsilon/4)(k_{1}+1)^{-1}.\label{eq:6.20}%
\end{align}
Defining now%
\begin{equation}
z^{0}:=y^{0}\text{ and }z^{k}=P_{\tilde{\Omega}_{k},\tilde{w}_{k}}%
(z^{k-1})\text{ for all }k\geq1\label{eq:6.21.22}%
\end{equation}
we obtain that%
\begin{equation}
d(z^{k_{1}},C)\leq\varepsilon/4.\label{eq:6.23}%
\end{equation}
Next we show, by induction, that for all $k=0,1,\dots,k_{1}$%
\begin{equation}
\left\Vert y^{k}-z^{k}\right\Vert \leq k(\varepsilon/4)(k_{1}+1)^{-1}%
.\label{eq:6.24}%
\end{equation}
Clearly, for $k=0$ (\ref{eq:6.24}) holds. Assume that $0\leq k<k_{1}$ and that
(\ref{eq:6.24}) holds. Then, by (\ref{eq:6.20}), (\ref{eq:6.21.22}),
(\ref{eq:6.24}) and nonexpansivity of $P_{\Omega,w}$ of (\ref{eq:1.7}),%
\begin{align}
\left\Vert y^{k+1}-z^{k+1}\right\Vert  &  =\left\Vert y^{k+1}-P_{\tilde
{\Omega}_{k+1},\tilde{w}_{k+1}}(z^{k})\right\Vert \nonumber\\
&  \leq\left\Vert y^{k+1}-P_{\tilde{\Omega}_{k+1},\tilde{w}_{k+1}}%
(y^{k})\right\Vert +\left\Vert P_{\tilde{\Omega}_{k+1},\tilde{w}_{k+1}}%
(y^{k}-P_{\tilde{\Omega}_{k+1},\tilde{w}_{k+1}}(z^{k}))\right\Vert \nonumber\\
&  \leq(\varepsilon/4)(k_{1}+1)^{-1}+k(\varepsilon/4)(k_{1}+1)^{-1}\nonumber\\
&  =(\varepsilon/4)(k_{1}+1)^{-1}(k+1).
\end{align}
Thus, we have shown by induction that (\ref{eq:6.24}) holds for all
$k=0,1,\dots,k_{1}$ and, in particular, that $\left\Vert y^{k_{1}}-z^{k_{1}%
}\right\Vert \leq\varepsilon/4,$ which implies that%
\begin{equation}
d(x^{k},C)=d(y^{k_{1}},C)\leq\left\Vert y^{k_{1}}-z^{k_{1}}\right\Vert
+d(z^{k_{1}},C)\leq\varepsilon/2,
\end{equation}
and Theorem \ref{thm:thm5.1} is proved. $\blacksquare$

\section{Main convergence result for the string-\newline averaging projected
subgradient method\label{sec:convergence}}

We will make use of the following bounded regularity condition, see
\cite[Definition 5.1]{bb96}, formulated in Hilbert space $H$.

\begin{condition}
\label{cond:A}For each $\varepsilon>0$ and each $M>0$ there exists
$\delta=\delta(\varepsilon,M)>0$ such that for each $x\in B(0,M)$ satisfying
$d(x,C_{i})\leq\delta$, $i=1,2,\dots,m,$ the inequality $d(x,C)\leq
\varepsilon$ holds.
\end{condition}

It follows from \cite[Proposition 5.4(iii)]{bb96} (see also \cite[Proposition
5]{cz12}) that if the Hilbert space $H$ is finite-dimensional then Condition
\ref{cond:A} holds.

Our main convergence result for the string-averaging projected subgradient
method Algorithm \ref{alg:sa-psm} is the following theorem.

\begin{theorem}
{\label{thm:7.1}} Let the assumptions of Theorem \ref{thm:thm5.1} hold and
assume that $\phi:R^{J}\rightarrow R$ is a convex function. Let%
\begin{equation}
\{\alpha_{k}\}_{k=0}^{\infty}\subset(0,1],\text{ such that}\;\lim
_{k\rightarrow\infty}\alpha_{k}=0\text{ and}\;\sum_{k=0}^{\infty}\alpha
_{k}=\infty, \label{eq:7.3}%
\end{equation}
and let $\varepsilon\in(0,1)$. Then there exist an integer $K$ and a real
number $\bar{L}$ such that for any sequence $\{x^{k}\}_{k=0}^{\infty}$,
generated by Algorithm \ref{alg:sa-psm} with $\{(\Omega_{k},w_{k}%
)\}_{k=1}^{\infty}\subset\mathcal{M}_{\ast\ast}$, the inequalities%
\begin{equation}
d(x^{k},SOL(\phi,C))\leq\varepsilon{\text{ and }}\phi(x^{k})\leq
\operatorname{inf}\{\phi(z)\mid z\in SOL(\phi,C)\}+\varepsilon\bar{L}%
\end{equation}
hold for all integers $k\geq K$.
\end{theorem}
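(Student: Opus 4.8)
The plan is to prove the two conclusions separately—first that the iterates approach the feasible set $C$, then that their objective values approach the optimal value—and to recombine them at the end through a compactness argument. Throughout I would fix a point $y^{\ast}\in SOL(\phi,C)$ and write $\phi^{\ast}:=\phi(y^{\ast})=\operatorname{inf}\{\phi(z)\mid z\in SOL(\phi,C)\}$. Since $C\subseteq C_{s}\subseteq B(0,M)$ by (\ref{eq:boundedness}), the set $C$ is compact, so $SOL(\phi,C)$ is nonempty and compact and $y^{\ast}$ exists.

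First I would obtain the feasibility conclusion by reducing Algorithm \ref{alg:sa-psm} to Theorem \ref{thm:thm5.1}. Writing $z^{k}$ for the argument of $P_{\Omega_{k},w_{k}}$ in (\ref{eq:alg-sa-psm-1})--(\ref{eq:alg-sa-psm-2}), one has $\|z^{k}-x^{k}\|\leq\alpha_{k}$ in both branches (it is $0$ in branch (1.1) and exactly $\alpha_{k}$ in branch (1.2), because the subgradient is normalized). Nonexpansivity of $P_{\Omega_{k},w_{k}}$ then gives $\|x^{k+1}-P_{\Omega_{k},w_{k}}(x^{k})\|\leq\alpha_{k}$, which, after a harmless reindexing, is exactly the hypothesis of Theorem \ref{thm:thm5.1} with $\gamma_{k}=\alpha_{k-1}\to0$. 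Applying that theorem for each tolerance separately yields $d(x^{k},C)\to0$. In the same breath Lemma \ref{lem:6.1} (applicable since $\alpha_{k}\leq1$) gives $\|x^{k}\|\leq3M+1$; because $\phi$ is convex on $R^{J}$ its subgradients are bounded on this ball, say $\|s^{k}\|\leq L$, and $L$ is also a Lipschitz constant for $\phi$ on $B(0,3M+1)$. This $L$ will fix $\bar{L}$.

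The core is the objective-value estimate. Using $P_{\Omega_{k},w_{k}}(y^{\ast})=y^{\ast}$ (from (\ref{eq:1.9})), nonexpansivity, and the subgradient inequality $\langle s^{k},x^{k}-y^{\ast}\rangle\geq\phi(x^{k})-\phi^{\ast}$, I would derive the Fej\'{e}r-type recursion
\[
\|x^{k+1}-y^{\ast}\|^{2}\leq\|x^{k}-y^{\ast}\|^{2}-2\frac{\alpha_{k}}{\|s^{k}\|}\big(\phi(x^{k})-\phi^{\ast}\big)+\alpha_{k}^{2}.
\]
The feasibility bound together with Lipschitz continuity gives $\phi(x^{k})\geq\phi^{\ast}-o(1)$, so the middle term is a genuine descent term once $\phi(x^{k})$ exceeds $\phi^{\ast}$. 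The quantitative point is that on any step with $\phi(x^{k})>\phi^{\ast}+\delta'$ and $\alpha_{k}$ small enough to absorb $\alpha_{k}^{2}$, the recursion forces $\|x^{k+1}-y^{\ast}\|^{2}\leq\|x^{k}-y^{\ast}\|^{2}-(\delta'/L)\alpha_{k}$. Summing over such ``bad'' steps, boundedness of the iterates bounds the total decrease, so the bad steps carry a finite $\alpha$-budget; since $\sum_{k}\alpha_{k}=\infty$, this yields $\liminf_{k}\phi(x^{k})\leq\phi^{\ast}+\delta'$.

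The main obstacle—and where I would concentrate the real work—is upgrading this $\liminf$ statement to the uniform ``for all $k\geq K$'' conclusion, i.e. precluding recurrent excursions of $\phi(x^{k})$ above $\phi^{\ast}+\varepsilon\bar{L}$. Two facts are the levers. First, because $d(x^{k},C)\to0$ and $\|P_{\Omega_{k},w_{k}}(z)-z\|\leq2\,d(z,C)$ (compare $z$ with its projection onto $C$, a fixed point of every $P[t]$), the per-step displacement $\|x^{k+1}-x^{k}\|\leq2\,d(z^{k},C)+\alpha_{k}$ tends to $0$, so $|\phi(x^{k+1})-\phi(x^{k})|\to0$; climbing from the good band up past $\phi^{\ast}+\varepsilon\bar{L}$ therefore costs a fixed spatial displacement spread over many steps, all of them bad. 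Second, the finite bad-step budget from the previous paragraph limits how often this can recur. Tying these together—plausibly via a contradiction/compactness argument exploiting finite-dimensionality and the uniform bound $\|x^{k}\|\leq3M+1$, which also delivers the uniformity of $K$ over all admissible choices of $\{(\Omega_{k},w_{k})\}$ and subgradients—is the delicate step; it produces $\phi(x^{k})\leq\phi^{\ast}+\varepsilon\bar{L}$ for all large $k$, with $\bar{L}$ a fixed multiple of $L$. Finally I would recombine: projecting $x^{k}$ to $\bar{x}^{k}\in C$, the two bounds make $\bar{x}^{k}$ a near-minimizer of $\phi$ over the compact set $C$, and since the sublevel sets $\{y\in C\mid\phi(y)\leq\phi^{\ast}+\rho\}$ shrink to $SOL(\phi,C)$ as $\rho\downarrow0$, choosing the feasibility tolerance small at the outset forces $d(x^{k},SOL(\phi,C))\leq\varepsilon$, completing both inequalities.
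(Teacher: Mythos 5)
Your first three steps track the paper's proof closely and are sound: the reduction of Algorithm \ref{alg:sa-psm} to the hypothesis of Theorem \ref{thm:thm5.1} via $\left\Vert x^{k}-P_{\Omega_{k},w_{k}}(x^{k-1})\right\Vert \leq\alpha_{k-1}$, the Fej\'{e}r-type recursion obtained from $P_{\Omega,w}(\bar{x})=\bar{x}$, nonexpansivity and the subgradient inequality (the paper packages this as Lemma \ref{lem-8.3}, stated for $d(\cdot,SOL(\phi,C))$ rather than for a fixed $y^{\ast}$), and the finite ``bad-step budget'' argument that forces some index $p$ in a prescribed window with $\phi(x^{p})\leq\phi(\bar{x})+\varepsilon_{0}$. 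Up to that point you have the paper's proof of the existence of $p\in[n_{2}+1,n_{0}]$ with $d(x^{p},SOL(\phi,C))\leq\varepsilon/4$.

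The genuine gap is exactly where you flag ``the delicate step'': upgrading the $\liminf$ statement to ``for all $k\geq K$.'' The two levers you propose do not combine into a proof. Your excursion argument gives, for each climb from the good band past $\phi^{\ast}+\varepsilon\bar{L}$, a lower bound on $\sum(2d(z^{k},C)+\alpha_{k})$ over the steps of that excursion, not on $\sum\alpha_{k}$ alone; since Theorem \ref{thm:thm5.1} only yields $d(z^{k},C)\rightarrow0$ and not summability of $d(z^{k},C)$, infinitely many excursions are compatible with a finite $\alpha$-budget on the bad steps, and no contradiction results. The paper closes this with a different, per-step dichotomy that you should adopt: suppose $q>p$ is the \emph{first} index with $d(x^{q},SOL(\phi,C))>\varepsilon$. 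If $\phi(x^{q-1})\leq\phi(\bar{x})+\varepsilon_{0}$, then by the choice of $\varepsilon_{0}$ (any $x$ with $d(x,C)\leq\varepsilon_{0}$ and $\phi(x)\leq\phi(\bar{x})+\varepsilon_{0}$ satisfies $d(x,SOL(\phi,C))\leq\varepsilon/4$) the point $x^{q-1}$ is $\varepsilon/4$-close to $SOL(\phi,C)$, and a single step --- a nonexpansive map fixing $SOL(\phi,C)\subseteq C$ applied after a displacement of at most $\alpha_{q-1}$ --- cannot carry it beyond distance $\varepsilon$; if instead $\phi(x^{q-1})>\phi(\bar{x})+\varepsilon_{0}$, then Lemma \ref{lem-8.3} with $\Delta=\varepsilon_{0}$ gives $d(x^{q},SOL(\phi,C))^{2}\leq d(x^{q-1},SOL(\phi,C))^{2}\leq\varepsilon^{2}$. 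Either branch contradicts the choice of $q$, so no excursion counting is needed. The objective-value bound then follows from $d(x^{k},SOL(\phi,C))\leq\varepsilon$ and the Lipschitz constant $\bar{L}$ of (\ref{eq:Lbar}), i.e.\ in the opposite order from the one you propose. Without this (or an equivalent) stay-close mechanism, your argument establishes only $\liminf_{k}\phi(x^{k})\leq\phi^{\ast}+\delta'$ and hence does not prove the theorem.
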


It is well-known that the function $\phi$ is continuous due to its convexity.
Clearly, $\phi$ is Lipschitz on bounded subsets of $R^{J}$, therefore, since
$C$ is bounded, by the assumption (\ref{eq:boundedness}), there exists a point
$x\in SOL(\phi,C),$ i.e., $SOL(\phi,C)$ is nonempty. Furthermore, there exists
a number $\bar{L}>1$ such that%
\begin{equation}
|\phi(z^{1})-\phi(z^{2})|\leq\bar{L}||z^{1}-z^{2}||{\text{ for all }}%
z^{1},z^{2}\in B(0,3M+2).\label{eq:Lbar}%
\end{equation}

We will need the following lemma.

\begin{lemma}
{\label{lem-8.3}}Let $\bar{x}\in SOL(\phi,C)$ and let $\Delta\in(0,1]$
$\alpha>0$ and $x\in R^{J}$ satisfy%
\begin{equation}
\left\Vert x\right\Vert \leq3M+2,\;\phi(x)>\phi(\bar{x})+\Delta.
\label{eq:8.3}%
\end{equation}
Further, let $v\in\partial\phi(x)$ and $(\Omega,w)\in\mathcal{M}_{\ast\ast}.$
Then $v\not =0$ and%
\begin{equation}
y:=P_{\Omega,w}\left(  x-\alpha||v||^{-1}v\right)
\end{equation}
satisfies%
\begin{equation}
\Vert y-\bar{x}\Vert^{2}\leq\Vert x-\bar{x}\Vert^{2}-2\alpha(4\bar{L}%
)^{-1}\Delta+\alpha^{2},
\end{equation}
where $\bar{L}$ is as in (\ref{eq:Lbar}). Moreover,
\begin{equation}
d(y,SOL(\phi,C))^{2}\leq d(x,SOL(\phi,C))^{2}-2\alpha(4\bar{L})^{-1}%
\Delta+\alpha^{2}.
\end{equation}

\end{lemma}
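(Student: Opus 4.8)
The plan is to establish the single-step descent inequality using the subgradient inequality together with the nonexpansivity of the string-averaging operator $P_{\Omega,w}$. First I would dispose of the claim $v\neq 0$: since $\bar{x}\in SOL(\phi,C)$ and $x$ satisfies $\phi(x)>\phi(\bar{x})+\Delta\geq\phi(\bar{x})$, the point $x$ is not a minimizer of $\phi$ over $C$; in particular $\phi(x)>\phi(\bar{x})$ forces $0\notin\partial\phi(x)$ by the subgradient characterization of minima (if $0$ were a subgradient, $x$ would be a global minimizer of $\phi$ on all of $R^{J}$, contradicting $\phi(x)>\phi(\bar{x})$). Hence any $v\in\partial\phi(x)$ is nonzero and $x-\alpha\|v\|^{-1}v$ is well defined.

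Next I would bound $\|y-\bar{x}\|^{2}$. Since $\bar{x}\in C$, the fixed-point property $(\ref{eq:1.9})$ gives $P[t](\bar{x})=\bar{x}$ for every index vector $t$, hence $P_{\Omega,w}(\bar{x})=\bar{x}$. Combining this with the nonexpansivity $(\ref{eq:1.8})$–$(\ref{eq:1.7})$ of $P_{\Omega,w}$ yields
\begin{equation}
\|y-\bar{x}\|=\left\Vert P_{\Omega,w}\left(x-\alpha\|v\|^{-1}v\right)-P_{\Omega,w}(\bar{x})\right\Vert\leq\left\Vert x-\alpha\|v\|^{-1}v-\bar{x}\right\Vert .
\end{equation}
Squaring and expanding the right-hand side gives
\begin{equation}
\|y-\bar{x}\|^{2}\leq\|x-\bar{x}\|^{2}-2\alpha\|v\|^{-1}\langle v,x-\bar{x}\rangle+\alpha^{2},
\end{equation}
since the cross term picks up $\|\alpha\|v\|^{-1}v\|^{2}=\alpha^{2}$.

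The crux is lower-bounding the inner product $\|v\|^{-1}\langle v,x-\bar{x}\rangle$ by $(4\bar{L})^{-1}\Delta$. The subgradient inequality $v\in\partial\phi(x)$ gives $\phi(\bar{x})\geq\phi(x)+\langle v,\bar{x}-x\rangle$, i.e. $\langle v,x-\bar{x}\rangle\geq\phi(x)-\phi(\bar{x})>\Delta$ by $(\ref{eq:8.3})$. It then remains to control $\|v\|$ from above, and this is where the Lipschitz constant $\bar{L}$ of $(\ref{eq:Lbar})$ enters: for a convex function, every subgradient at a point in $B(0,3M+2)$ has norm bounded by the local Lipschitz constant, so $\|v\|\leq\bar{L}$ (one should verify the relevant points stay in $B(0,3M+2)$, which holds since $\|x\|\leq 3M+2$ and $\|\bar{x}\|\leq M$). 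The factor $4$ rather than $1$ in $(4\bar{L})^{-1}$ is presumably slack carried for uniformity with the proof of Theorem \ref{thm:7.1}, so I expect $\langle v,x-\bar{x}\rangle/\|v\|\geq\Delta/\bar{L}\geq\Delta/(4\bar{L})$ to suffice. Substituting gives the first displayed inequality. I anticipate the main obstacle to be the careful justification that $\|v\|\leq\bar{L}$ from the local Lipschitz property of $\phi$, ensuring all evaluation points lie in the ball on which $\bar{L}$ is the Lipschitz constant.

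Finally, for the ``moreover'' statement I would observe that $\bar{x}\in SOL(\phi,C)$ was an arbitrary solution point; choosing $\bar{x}$ to be the (or a) nearest point of $SOL(\phi,C)$ to $x$ gives $\|x-\bar{x}\|=d(x,SOL(\phi,C))$, while $\|y-\bar{x}\|\geq d(y,SOL(\phi,C))$ trivially since $\bar{x}\in SOL(\phi,C)$. The chain
\begin{equation}
d(y,SOL(\phi,C))^{2}\leq\|y-\bar{x}\|^{2}\leq\|x-\bar{x}\|^{2}-2\alpha(4\bar{L})^{-1}\Delta+\alpha^{2}=d(x,SOL(\phi,C))^{2}-2\alpha(4\bar{L})^{-1}\Delta+\alpha^{2}
\end{equation}
then delivers the second inequality, completing the proof.
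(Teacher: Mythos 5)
Your overall skeleton matches the paper's: the fixed-point property $P_{\Omega,w}(\bar{x})=\bar{x}$ plus nonexpansivity reduce everything to bounding $\left\Vert x-\alpha\Vert v\Vert^{-1}v-\bar{x}\right\Vert^{2}$, the square is expanded the same way, and your nearest-point argument for the ``moreover'' clause is the intended one (it works because $\phi(\bar{x})$ is the same for every $\bar{x}\in SOL(\phi,C)$, so the hypothesis (\ref{eq:8.3}) is insensitive to which solution point is chosen). The claim $v\neq0$ is also handled correctly. The one step where you diverge is the crux, and there your argument has a genuine gap: you bound the cross term by writing $\langle v,x-\bar{x}\rangle>\Delta$ (correct) and then dividing by $\Vert v\Vert\leq\bar{L}$. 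But $\bar{L}$ in (\ref{eq:Lbar}) is only a Lipschitz constant for $\phi$ on the \emph{closed} ball $B(0,3M+2)$, while the lemma allows $\Vert x\Vert=3M+2$. At such a boundary point the test point $x+t\Vert v\Vert^{-1}v$ needed to convert the Lipschitz bound into a subgradient-norm bound leaves the ball, and a subgradient of the globally defined convex $\phi$ at a boundary point of $B(0,3M+2)$ can have norm far exceeding $\bar{L}$ (e.g.\ $\phi$ with a kink at the sphere of radius $3M+2$). You flagged this as ``the main obstacle'' but did not resolve it, and it cannot be resolved from (\ref{eq:Lbar}) alone.

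The paper's proof is designed precisely to avoid any upper bound on $\Vert v\Vert$. It applies the subgradient inequality not at $\bar{x}$ but at the auxiliary point $\bar{z}:=\bar{x}+4^{-1}\bar{L}^{-1}\Delta\Vert v\Vert^{-1}v$, which lies in $B(\bar{x},4^{-1}\Delta\bar{L}^{-1})$; the Lipschitz constant is used only to show $\phi(z)\leq\phi(\bar{x})+4^{-1}\Delta$ on that small ball around $\bar{x}$ (which sits well inside $B(0,3M+2)$ since $\Vert\bar{x}\Vert\leq M$), whence $\langle v,\bar{z}-x\rangle\leq\phi(\bar{z})-\phi(x)\leq-(3/4)\Delta<0$. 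Expanding $\langle\Vert v\Vert^{-1}v,\bar{z}-x\rangle<0$ then gives $\langle\Vert v\Vert^{-1}v,\bar{x}-x\rangle<-4^{-1}\bar{L}^{-1}\Delta$ directly, because the shift contributes exactly $4^{-1}\bar{L}^{-1}\Delta\langle\Vert v\Vert^{-1}v,\Vert v\Vert^{-1}v\rangle=4^{-1}\bar{L}^{-1}\Delta$ with the normalization $\Vert v\Vert^{-1}$ cancelling. So the factor $4$ is not ``slack for uniformity'' as you guessed; it is the radius of the ball on which $\phi$ rises by at most $4^{-1}\Delta$, and it is what makes the argument go through with no control on $\Vert v\Vert$. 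Your route would be salvageable if you either restricted to $\Vert x\Vert<3M+2$ (as happens in the application, where $\Vert x^{k}\Vert\leq M+1$) or enlarged the ball in (\ref{eq:Lbar}), but as written the step $\Vert v\Vert\leq\bar{L}$ is unjustified.
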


\begin{proof}
From (\ref{eq:8.3}) $v\not =0$. For $\bar{x}\in SOL(\phi,C)$, we have, by
(\ref{eq:Lbar}) and (\ref{eq:boundedness}), that for each $z\in B(\bar
{x},4^{-1}\Delta\bar{L}^{-1})$,%
\begin{equation}
\phi(z)\leq\phi(\bar{x})+\bar{L}||z-\bar{x}||\leq\phi(\bar{x})+4^{-1}\Delta.
\end{equation}
Therefore, (\ref{eq:8.3}) and $v\in\partial\phi(x)$, imply that%
\begin{equation}
\left\langle v,z-x\right\rangle \leq\phi(z)-\phi(x) \leq-(3/4)\Delta\text{ for
all }z\in B(\bar{x},4^{-1}\Delta\bar{L}^{-1}).
\end{equation}
From this inequality we deduce that%
\begin{equation}
\left\langle ||v||^{-1}v,z-x\right\rangle <0{\text{ for all }}z\in B(\bar
{x},4^{-1}\Delta\bar{L}^{-1}),
\end{equation}
or, setting $\bar{z}:=\bar{x}+4^{-1}\bar{L}^{-1}\Delta\Vert v\Vert^{-1}v,$
that%
\begin{equation}
0>\left\langle ||v||^{-1}v,\bar{z}-x\right\rangle =\left\langle ||v||^{-1}%
v,\bar{x}+4^{-1}\bar{L}^{-1}\Delta\Vert v\Vert^{-1}v-x\right\rangle .
\end{equation}
This leads to
\begin{equation}
\left\langle ||v||^{-1}v,\bar{x}-x\right\rangle <-4^{-1}\bar{L}^{-1}\Delta.
\end{equation}
Putting $y_{0}:=x-\alpha\Vert v\Vert^{-1}v,$ we arrive at%
\begin{align}
\Vert y_{0}-\bar{x}\Vert^{2}  &  =\Vert x-\alpha\Vert v\Vert^{-1}v-\bar
{x}\Vert^{2}\nonumber\\
&  =\Vert x-\bar{x}\Vert^{2}-2\left\langle x-\bar{x},\alpha\Vert v\Vert
^{-1}v\right\rangle +\alpha^{2}\nonumber\\
&  \leq\Vert x-\bar{x}\Vert^{2}-2\alpha(4\bar{L})^{-1}\Delta+\alpha^{2}.
\end{align}
From all the above we obtain%
\begin{align}
\Vert y-\bar{x}\Vert^{2}  &  =\Vert P_{\Omega,w}(y_{0})-\bar{x}\Vert^{2}%
\leq\Vert y_{0}-\bar{x}\Vert^{2}\nonumber\\
&  \leq\Vert x-\bar{x}\Vert^{2}-2\alpha(4\bar{L})^{-1}\Delta+\alpha^{2},
\end{align}
which completes the proof of the lemma.
\end{proof}

Now we present the proof of Theorem \ref{thm:7.1}.

\textbf{Proof of Theorem \ref{thm:7.1}. }Fix an $\bar{x}\in SOL(\phi,C).$ It
is not difficult to see that there exists a number $\varepsilon_{0}%
\in(0,\varepsilon/4)$ such that for each $x\in R^{J}$ satisfying
$d(x,C)\leq\varepsilon_{0}$ and $\phi(x)\leq\phi(\bar{x})+\varepsilon_{0}$ we
have%
\begin{equation}
d(x,SOL(\phi,C))\leq\varepsilon/4. \label{eq:P2}%
\end{equation}


Since $\{x^{k}\}_{k=0}^{\infty}$ is generated by Algorithm \ref{alg:sa-psm}
and $\{(\Omega_{k},w_{k})\}_{k=1}^{\infty}\subset\mathcal{M}_{\ast\ast}$ we
know, from (\ref{eq:alg-sa-psm-2}) and (\ref{eq:1.8}), that%
\begin{equation}
\left\Vert x^{k}-P_{\Omega_{k},w_{k}}(x^{k-1})\right\Vert \leq\alpha
_{k-1},\text{ for all }k\geq1, \label{eq:8.23}%
\end{equation}
holds. Thus, by Theorem \ref{thm:thm5.1} and (\ref{eq:7.3}), there exists an
integer $n_{1}$ such that%
\begin{equation}
d(x^{k},C)\leq\varepsilon_{0},\text{ for all }k\geq n_{1}. \label{eq:8.24}%
\end{equation}
This, along with (\ref{eq:boundedness}), guarantees that%
\begin{equation}
\Vert x^{k}\Vert\leq M+1,{\text{ for all }}k\geq n_{1}. \label{eq:8.25}%
\end{equation}

Choose a positive $\varepsilon_{1}$ for which $\varepsilon_{1}<(8\bar{L}%
)^{-1}\varepsilon_{0}.$ By (\ref{eq:7.3}) there is an integer $n_{2}>n_{1}$
such that%
\begin{equation}
\alpha_{k}\leq\varepsilon_{1}(32)^{-1},{\text{ for all }}k>n_{2},
\label{eq:8.16}%
\end{equation}
and so, there is an integer $n_{0}>n_{2}+4$ such that%
\begin{equation}
\sum_{k=n_{2}}^{n_{0}-1}\alpha_{k}>8(2M+1)^{2}\bar{L}\varepsilon_{0}^{-1}.
\label{eq:8.17}%
\end{equation}

We show now that there exists an integer $p\in\lbrack n_{2}+1,n_{0}]$ such
that $\phi(x^{p})\leq\phi(\bar{x})+\varepsilon_{0}$. Assuming the contrary
means that for all $k\in\lbrack n_{2}+1,n_{0}]$,
\begin{equation}
\phi(x^{k})>\phi(\bar{x})+\varepsilon_{0}. \label{eq:8.26}%
\end{equation}
By (\ref{eq:8.26}), (\ref{eq:7.3}), (\ref{eq:8.25}) and using Lemma
\ref{lem-8.3}, with $\Delta=\varepsilon_{0}$, $\alpha=\alpha_{k}$, $x=x^{k}$,
$y=x^{k+1}$, $v=s^{k}$, for all $k\in\lbrack n_{2}+1,n_{0}]$, we get%
\begin{equation}
d(x^{k+1},SOL(\phi,C))^{2}\leq d(x^{k},SOL(\phi,C))^{2}-2\alpha_{k}(4\bar
{L})^{-1}\varepsilon_{0}+\alpha_{k}^{2}.
\end{equation}
According to the choice of $\varepsilon_{1}$ and by (\ref{eq:8.16}) this
implies that for all $k\in\lbrack n_{2}+1,n_{0}]$,
\begin{align}
d(x^{k},SOL(\phi,C))^{2}-d(x^{k+1},SOL(\phi,C))^{2}  &  \geq\alpha_{k}%
[(2\bar{L})^{-1}\varepsilon_{0}-\alpha_{k}]\nonumber\\
&  \geq\alpha_{k}(4\bar{L})^{-1}\varepsilon_{0},
\end{align}
which, together with (\ref{eq:8.25}) and (\ref{eq:boundedness}), gives%
\begin{align}
(2M+1)^{2}  &  \geq d(x^{n_{2}+1},SOL(\phi,C))^{2}\nonumber\\
&  \geq\sum_{k=n_{2}+1}^{n_{0}}\left(  d(x^{k},SOL(\phi,C))^{2}-d(x^{k+1}%
,SOL(\phi,C))^{2}\right) \nonumber\\
&  \geq(4\bar{L})^{-1}\varepsilon_{0}\sum_{k=n_{2}+1}^{n_{0}}\alpha_{k}%
\end{align}
and%
\begin{equation}
\sum_{k=n_{2}+1}^{n_{0}}\alpha_{k}\leq(2M+1)^{2}4\bar{L}\varepsilon_{0}^{-1}.
\end{equation}
This contradicts (\ref{eq:8.17}), proving that there is an integer
$p\in\lbrack n_{2}+1,n_{0}]$ such that $\phi(x^{p})\leq\phi(\bar
{x})+\varepsilon_{0}$. Thus, by (\ref{eq:8.24}) and (\ref{eq:P2}),%
\begin{equation}
d(x^{p},SOL(\phi,C))\leq\varepsilon/4.
\end{equation}
We show that for all $k\geq p$, $d(x^{k},SOL(\phi,C))\leq\varepsilon$.
Assuming the contrary that%
\begin{equation}
\text{there exists a }q>p\text{ such that }d(x^{q},SOL(\phi,C))>\varepsilon.
\label{eq:8.31}%
\end{equation}
We may assume, without loss of generality, that%
\begin{equation}
d(x^{k},SOL(\phi,C))\leq\varepsilon,{\text{ for all }}p\leq k<q.
\label{eq:8.32}%
\end{equation}
One of the following two cases must hold: (i) $\phi(x^{q-1})\leq\phi(\bar
{x})+\varepsilon_{0},$ or (ii) $\phi(x^{q-1})>\phi(\bar{x})+\varepsilon_{0}.$
In case (i), since $p\in\lbrack n_{2}+1,n_{0}],$ (\ref{eq:8.24}),
(\ref{eq:8.25}) and (\ref{eq:P2}) show that%
\begin{equation}
d(x^{q-1},SOL(\phi,C))\leq\varepsilon/4.
\end{equation}
Thus, there is a point $z\in SOL(\phi,C)$ such that $\left\Vert x^{q-1}%
-z\right\Vert <\varepsilon/3.$ Using this fact and (\ref{eq:8.23}),
(\ref{eq:1.8}), (\ref{eq:1.9}) and (\ref{eq:8.16}), yields%
\begin{align}
\left\Vert x^{q}-z\right\Vert  &  \leq\left\Vert x^{q}-P_{\Omega_{q},w_{q}%
}(x^{q-1})\right\Vert +\left\Vert P_{\Omega_{q},w_{q}}(x^{q-1})-z\right\Vert
\nonumber\\
&  \leq\alpha_{q-1}+\left\Vert x^{q-1}-z\right\Vert \leq\varepsilon
/4+\varepsilon/3,
\end{align}
proving that $d(x^{q},SOL(\phi,C))\leq\varepsilon.$ This contradicts
(\ref{eq:8.31}) and implies that case (ii) must hold, namely that
$\phi(x^{q-1})>\phi(\bar{x})+\varepsilon_{0}$. This, along with (\ref{eq:8.25}%
), (\ref{eq:8.16}), the choice of $\varepsilon_{1}$, (\ref{eq:8.32}) and Lemma
\ref{lem-8.3}, with $\Delta=\varepsilon_{0}$, $\alpha=\alpha_{q-1}$,
$x=x^{q-1}$, $y=x^{q}$, shows that%
\begin{align}
d(x^{q},SOL(\phi,C))^{2}  &  \leq d(x^{q-1},SOL(\phi,C))^{2}-2\alpha
_{q-1}(4\bar{L})^{-1}\varepsilon_{0}+\alpha_{q-1}^{2}\nonumber\\
&  \leq d(x^{q-1},SOL(\phi,C))^{2}-\alpha_{q-1}(2\bar{L})^{-1}\varepsilon
_{0}-\alpha_{q-1})\nonumber\\
&  \leq d(x^{q-1},SOL(\phi,C))^{2}\leq\varepsilon^{2},
\end{align}
namely, that $d(x_{q},C_{min})\leq\epsilon.$ This contradicts (\ref{eq:8.31}),
proving that, for all $k\geq p$, $d(x^{k},SOL(\phi,C))\leq\varepsilon$.
Together with (\ref{eq:boundedness}) and (\ref{eq:Lbar}) this implies that,
for all $k\geq n_{0}$,%
\begin{equation}
\phi(x^{k})\leq\operatorname{inf}\{\phi(z)\mid z\in SOL(\phi,C)\}+\varepsilon
\bar{L}\}
\end{equation}
and the proof of Theorem \ref{thm:7.1} is complete.\bigskip

\textbf{Acknowledgments}. We greatly appreciate the constructive comments of
two anonymous reviewers which helped to improve the paper. The work of the
first author was partially supported by the United States-Israel Binational
Science Foundation (BSF) Grant number 200912 and US Department of Army Award
number W81XWH-10-1-0170.

\end{document}